\definecolor{ggreen}{rgb}{0,0.75,0.08}
\theoremstyle{plain}
\theoremstyle{definition}
\newtheorem{theorem}{Theorem}[section]
\newtheorem{lemma}[theorem]{Lemma}
\newtheorem{corollary}[theorem]{Corollary}
\newtheorem{definition}[theorem]{Definition}
\newcounter{dummy3} 
\newtheorem{thm3}[dummy3]{Theorem}
\newtheorem{cnj3}[dummy3]{Conjecture}
\newtheorem{question*}{Question}
\newtheorem{notation}[theorem]{Notation}
\newcommand{\A}{\ensuremath{\alpha}}
\newcommand{\K}{\ensuremath{\kappa}}
\newcommand{\B}{\ensuremath{\beta}}
\newcommand{\si}{\ensuremath{\sigma}}
\newcommand{\W}{\ensuremath{\omega}}
\newcommand{\RR}{\ensuremath{\mathbb R}}
\newcommand{\II}{\ensuremath{\mathbb I}}
\newcommand{\KK}{\ensuremath{\mathbb K}}
\newcommand{\JJ}{\ensuremath{\mathbb J}}
\newcommand{\LL}{\ensuremath{\mathbb L}}
\newcommand{\NN}{\ensuremath{\mathbb N}}
\newcommand{\HH}{\ensuremath{\mathbb H}}
\newcommand{\0}{\ensuremath{\varnothing}}
\newcommand{\cL}{\ensuremath{\mathcal L}}
\newcommand{\cD}{\ensuremath{\mathcal D}}
\newcommand{\cS}{\ensuremath{\mathcal S}}
\newcommand{\cK}{\ensuremath{\mathcal K}}
\newcommand{\cC}{\ensuremath{\mathcal C}}
\newcommand{\cI}{\ensuremath{\mathcal I}}
\newcommand{\cU}{\ensuremath{\mathcal U}}
\newcommand{\DD}{\ensuremath{\partial}}
\newcommand{\cn}{\ensuremath{\frak c}}
\begin{document}

\openup 0.6em

\fontsize{13}{5}
\selectfont

	\begin{center}\LARGE The Shore Point Existence Problem is Equivalent to the Non-Block Point Existence Problem
	\end{center}
	
	\begin{align*}
	\text{\Large Daron Anderson }  \qquad \text{\Large Trinity College Dublin. Ireland }  
	\end{align*} 
	\begin{align*} \text{\Large andersd3@tcd.ie} \qquad \text{\Large Preprint February 2019}  
	\end{align*}$ $\\

	\begin{center}
		\textbf{ \large Abstract}
	\end{center}
	
	\noindent  We prove the three propositions are equivalent:
		$(a)$ Every Hausdorff continuum has two or more shore points.
		$(b)$ Every Hausdorff continuum has two or more non-block points.
		$(c)$ Every Hausdorff continuum is coastal at each point.
		Thus it is consistent that all three properties fail.
		We also give the following characterisation of shore points: The point $p$ of the continuum $X$ is a shore point if and only if there is a net of subcontinua in $\{K \in C(X): K \subset \K(p) - p\}$
		tending to $X$ in the Vietoris topology. This contrasts with the standard characterisation which only demands the net elements be contained in $X-p$.
		In addition we prove every point of an indecomposable continuum is a shore point.

\section{Introduction}

\noindent Leonel \cite{Leonel01} has improved the classic non-cut point theorem of Moore \cite{noncutmoore} 
by showing every metric continuum has two or more shore points.
Bobok, Pyrih and Vejnar \cite{B} observed Leonel's two shore points have the stronger property of being non-block points.

In \cite {Me2} the author proved it is consistent the result fails to generalise to Hausdorff continua.
Under Near Coherence of Filters (NCF) the Stone-\v Cech remainder $\HH^*$ of the half-line lacks non-block points and hence lacks coastal points.

This left open the question of whether there is a consistent example of a Hausdorff continuum without shore points.
This paper gives a positive answer.
Indeed we show the shore point and non-block point existence problems are equivalent.
They are also equivalent to a number of other problems involving shore, non-block, and coastal points of Hausdorff continua.

We also prove every shore point $p \in X$ has the stronger property of being a \textit{proper shore point}.
That means there is a net of subcontinua in the hyperspace $\{K \in C(X): K \subset \K(p) - p\}$ tending to $X$ in the Vietoris topology.
This is not apparent from the definition of a shore point, which only requires the net elements be contained in $X-p$.

\section{Terminology and Notation}

\noindent For sets $A$ and $B$ define $A - B = \{ a \in A \colon a \notin B\}$. 
For $B = \{b\}$ we write $A-b$ without confusion. 
For $A \subset B$ we do not presume $A$ is a proper subset of $B$.
For a subset $S \subset X$ denote by $S^\circ$ and $\overline S$ the interior and closure of $S$ respectively. 
The \textit{boundary} of $S$ means the set $\DD S = \overline S \cap \overline{(X-S)}$.

Throughout $X$ is a continuum. 
That is to say a nondegenerate compact connected Hausdorff space.
For background on metric continua see \cite{kur2} and \cite{nadlerbook}.
The results cited here have analagous proofs for non-metric continua.

Throughout all maps are assumed to be continuous.
The map $f:X \to Y$ of continua is called \textit{monotone} to mean $f^{-1}(y) \subset X$ is connected for each $y \in Y$.
Theorem 6.1.28 of \cite{Engelking} says moreover $f^{-1}(K) \subset X$ is a continuum for each subcontinuum $K \subset Y$.

For $a,b \in X$ we call $X$ \textit{irreducible} about $\{a,b\}$ to mean $\{a,b\}$ is not contained in a proper subcontinuum of $X$.
The subspace $A \subset X$ is called a \textit{semicontinuum} to mean
for each $a,c \in A$ some subcontinuum $K \subset A$ has $\{a,c\} \subset K$.
Every subspace $A \subset X$ is partitioned into maximal semicontinua called the \textit{continuum components} of $A$.

For $N\ge 2$ we say the subcontinua $X_1, \ldots , X_N \subset X$ form a \textit{decomposition} 
and write $X_1  \oplus \ldots \oplus X_N$ to mean $X_1 \cup \ldots \cup X_N =X$
and no $X_n$ is contained in the union of the others.
We call $X$ \textit{decomposable} to mean it admits a decomposition and \textit{indecomposable} otherwise.
The latter is equivalent to admitting no decomposition with $N=2$ 
and equivalent to each proper subcontinuum being nowhere dense.
We say $X$ is \textit{hereditarily indecomposable} to mean its every subcontinuum is indecomposable.
Equivalently each pair of subcontinua are either disjoint or nested.

The \textit{composant} $\K(x)$ of the point $x \in X$ is the union of all proper subcontinua that have $x$ as an element.
Indecomposable metric continua are partitioned into $\cn$ many pairwise disjoint composants \cite{Ccomposants}.
In case $\K(x) \ne \K(y)$ then $X$ is irreducible about $\{x,y\}$.

By \textit{boundary bumping} we mean the principle that, for each proper closed $E \subset X$, each component $C$ of $E$ meets the boundary $\DD E = \overline E \cap \overline {X-E}$.
For the non-metric proof see $\S$47, III Theorem 2 of \cite{kur2}. 
One corollary of boundary bumping is that any $p \in X$ is in the closure of each continuum component of $X-p$.

Throughout $C(X)$ is the set of subcontinua of $X$.
We call $p \in X$ a \textit{shore point} to mean for each finite collection of open sets $U_1,U_2, \ldots U_n \subset X$
some subcontinuum $K \subset X-p$ meets each $U_m$.
This is equivalent to some net in $\{K \in C(X): K \subset X - p\}$ tending to $X$ in the Vietoris topology.
We do not need the full definition of the Vietoris topology here.

We call $p \in X$ a \textit{cut point} to mean $X-p$ is disconnected and a \textit{non-cut point} otherwise.
Clearly each shore point is non-cut.
We call $p \in X$ a \textit{non-block point} to mean $X-p$ has a dense continuum component.
Every non-block point is a shore point but the converse fails in general \cite{B}.
We call $x \in X$ a \textit{coastal point} to mean $x$ is an element of some proper dense semicontinuum.
Clearly $X$ has a non-block point if and only if it has a coastal point.

Theorem 5 of \cite{Bing01} says every point of a metric continuum is coastal.
The result generalises to separable continua \cite{Me1} but not to Hausdorff continua.
Under the set-theoretic axiom Near Coherence of Filters the non-metric continuum $\HH^*$ lacks non-block points and hence lacks coastal points \cite{Me2}.

\section{Equivalent Problems}

\begin{definition}

The continuum $X$ is called \textit{partially coastal} to mean there are points $x,y \in X$ with $x$ coastal and $y$ non-coastal.

\end{definition}

\begin{thm3}\label{equiv} The following propositions are equivalent.

\begin{enumerate}
	
	\item There exists a continuum without coastal points.
	\item There exists a continuum with exactly one non-coastal point.
	\item There exists a partially coastal continuum.
	\item There exists a continuum without shore points.
	\item There exists a continuum with exactly one shore point
	\item There exists a continuum without non-block points.
	\item There exists a continuum with exactly one non-block point.

\end{enumerate}

\end{thm3}

\begin{proof}
$(6)\implies(1)$ because $X$ has a non-block point if and only if it has a coastal point.
$(2) \implies (3)$ follows from how every continuum has more than one point.
$(4) \implies (6)$ follows from how every non-block point is a shore point.

$(1) \implies (2):$ Suppose $X$ has no coastal points. 
Let $[0,1]$ be an arc and the continuum  $Y$ be obtained by gluing $1 \in [0,1]$ to any fixed $p\in X$.
Identify $X$ and $[0,1]$ with their images in $Y$.
The fact each $x \in Y -0$ is coastal is witnessed by
the dense semicontinuum $S = \big (\bigcup \{(1/n, 1] : n \in \NN \}\big ) \cup X = Y - 0$.

To see $0 \in Y$ is non-coastal suppose otherwise.
That means $ 0 \in S \subset Y-q$ for some dense proper semicontinuum $S \subset Y$ and point $q \in Y$.
It is easy to see $[0,1] \subset S$. Hence $q \notin [0,1]$ and $q \in X-p$.

Let $Q:Y \to X$ be the monotone map that collapses $[0,1]$ to the point $p \in X$ and leaves the points of $X$ fixed.
Then $Q(S) \subset X$ is a dense semicontinuum with $p \in Q(S) \subset X-q$ contradicting the assumption that $p \in X$
is non-coastal.

$(1) \implies (5):$ Suppose $X$ has no coastal points and define $Y$, $Q$ and $S$ as before.
The dense semicontinuum $S$ witnesses how $0 \in Y$ is a non-block point hence a shore point.

To see $0$ is the only shore point first observe each $q \in (0,1]$ is a cut point hence not a shore point.
Now suppose some $q \in Y - [0,1]$ is a shore point.
Fix the open set $U = (0,1)$.
By assumption, for every collection of open sets $U_1, \ldots , U_n \subset X-1$, some subcontinuum $K \subset  Y-q$ meets $U$ and all $U_m$.
Since $K$ meets $U$ and $U_1$ we have $1 \in K$ and hence $p \in Q(K)$.

Now allow $\{U_1,\ldots , U_n\}$ to range over all finite collections of open subsets of $X$.
The subcontinua $Q(K) \subset X - q$ constitute a dense proper semicontinuum of $X$.
Therefore $p \in X$ is coastal contrary to assumption.
We conclude $q$ is not a shore point as required.
The proof of $(1) \implies (7)$ is identical.

$(3) \implies (4):$ Suppose some $p \in X$ is non-coastal. 
Take two disjoint copies $X_1$ and $X_2$ of $X$.
Let $Y$ be the continuum obtained from $X_1 \sqcup X_2$ by identifying the points $a \in X_1$ and $b \in X_2$ corresponding to $p \in X$.
Identify $X_1$ and $X_2$ with their images in $Y$ and write $p \in Y$ for the shared image of $a$ and $b$.
We claim $Y$ has no shore points.

Observe $Y - p$ is homeomorphic to the disjoint union of $X_1 - a$ and $X_2 - b$. 
Hence $p$ is a cut point and not a shore point.
Now suppose $q \in Y-p$ is a shore point.
Without loss of generality $q \in X_2$.
Fix some open set $U \subset X_1 - p$.
For every collection of open sets $U_1, \ldots , U_n \subset X_2-p$ some subcontinuum $K \subset  Y-q$ meets $U$ and all $U_m$.
Since $K$ meets $U$ and $U_1$ we must have $p \in K$.
Moreover $K-p$ is disconnected and each component lies in one of $X_1$ or $X_2$.

Let $\cK$ be the family of components that lie in $X_2$.
Boundary bumping says $L \cup \{p\}$ is a continuum for each $L \in \cK$.
It follows $K \cap X_2 = \bigcup \big \{ \{p\} \cup L : L \in \cK \big \}$ is a subcontinuum of $X_2 - q$ that meets all $U_1,\ldots , U_n$.
Now allow $\{U_1,\ldots , U_n\}$ to range over all finite collections of open subsets of $X_2$.
Then the union $M$ of all $K \cap X_2$ is a dense semicontinuum of $X_2 - q$.
Let $Q: Y \to X_2$ be that map that compresses $X_1$ to the point $b \in X_2$.
Then the semicontinuum $Q(M)$ contradict how $X_2$ is not coastal at $b \in X_2$.
We conclude no $q \in Y$ is a shore point as required.

$(5) \implies (1):$ Suppose $p \in X$ is the unique shore point.
We claim $p$ is not coastal,
For then $p \in S \subset X-q$ for some $q \in X$ and dense proper semicontinuum $S$.
This implies $q$ is a non-block point and hence a shore point contrary to assumption.
This implies either $(1)$ or $(3)$ which we have shown are equivalent.

$(7) \implies (3):$ Suppose $p \in X$ is the unique non-block point.
Since there is a non-block point there is also a coastal point.
But $p \in X$ cannot be coastal as this would imply some $q \in X-p$ is a non-block point.
hence $X$ is partially coastal.

\end{proof}

Under the set-theoretic axiom Near Coherence of Filters (NCF) the Stone-\v Cech remainder $\HH^*$ 
of the half-line is a continuum with no coastal points \cite{Me2}. From this we get the corollary.

\begin{corollary}
Propositions $(1)-(7)$ from Theorem \ref{equiv} are consistent.
\end{corollary}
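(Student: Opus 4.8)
The plan is to combine the equivalence of Theorem \ref{equiv} with a single consistent example. First I would recall that Near Coherence of Filters (NCF) is consistent relative to ZFC: this is a theorem of Blass and Shelah, and in fact NCF follows from the inequality $\mathfrak{u} < \mathfrak{g}$ between cardinal characteristics of the continuum, whose consistency they established. Next I would invoke the fact, cited immediately above this corollary, that in any model of NCF the Stone-\v Cech remainder $\HH^*$ of the half-line is a continuum with no coastal points; thus proposition $(1)$ of Theorem \ref{equiv} holds in that model. Since Theorem \ref{equiv} proves $(1)$--$(7)$ equivalent as statements of ZFC, all of $(2)$--$(7)$ hold in the same model as well, and therefore each of the seven propositions is consistent with ZFC.

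There is no real obstacle here, since everything of substance is contained in Theorem \ref{equiv} and in the already-quoted property of $\HH^*$. The only point requiring care is the logical bookkeeping: $(1)$--$(7)$ are existential statements of set theory, their mutual equivalence is a ZFC theorem, and the consistency of NCF together with the truth of $(1)$ under NCF transfers to the consistency of all seven propositions simultaneously.
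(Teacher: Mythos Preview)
Your proposal is correct and follows exactly the paper's own approach: invoke the consistency of NCF, use that under NCF the remainder $\HH^*$ has no coastal points so proposition $(1)$ holds, and then apply the equivalence in Theorem \ref{equiv} to conclude all of $(1)$--$(7)$ are consistent. The only addition is your explicit attribution of the consistency of NCF to Blass--Shelah via $\mathfrak u<\mathfrak g$, which the paper leaves implicit.
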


For each continuum $X$ with a non-coastal point $p$ the author has shown \cite{Me1} there is a subcontinuum $M \subset X$ with $p \in M^\circ$
such that $X/M$ indecomposable and non- coastal at $M$.  
This raises the question whether $(1)-(7)$ are equivalent if we demand the continuum in question be indecomposable.

Section 5 shows $(4)$ and $(5)$ never hold for indecomposable continua.
In particular Theorem 3 says every point of an indecomposable continua is a shore point.

For $X$ indecomposable the remaining propositions seem more difficult and the methods of Theorem 1 no longer apply.
For example if $X$ is indecomposable with $(1)$ we cannot simply attach an arc to prove $(2)$ 
as the quotient space is manifestly decomposable.

In the other direction $X$ having $(7)$ does not imply the same for $X/M$.
For example assume NCF and glue any point $p \in \HH^*$ to the endpoint $1 \in [0,1]$.
Let $X$ be the quotient space.
Clearly for $X/M$ to be indecomposable $M$ is the union of the arc and some proper subcontinuum $K \subset \HH^*$ with $p \in K$.
Since the map $\HH^* \to \HH^*/K$ is monotone it follows $X/M \cong \HH^*/K$ lacks non-block points and fails $(7)$.

Before proceeding to Section 5 we take a diversion to prove all shore points are proper shore points.
Some of the terminology and results from Section 4 will be needed for Section 5.

\section{Proper Shore and Non-Block Points}

\begin{definition}
Recall $p \in X$ is called a \textit{shore point} to mean for each finite collection of open sets $U_1,U_2, \ldots , U_N \subset X$
some subcontinuum $K \subset X-p$ meets each $U_n$.
We call $p$ a \textit{proper shore point} to mean $K$ can always be chosen as a subset of $\K(p)-p$.
Otherwise we call $p$ a \textit{trivial shore point}.
\end{definition}

\begin{definition}
Recall $p \in X$ is called a \textit{non-block point} to mean $X-p$ has a dense continuum component.
We call $p$ a \textit{proper non-block} point to mean $\K(p)-p$ has a dense continuum component.
Otherwise we call $p$ a \textit{trivial non-block point}.
\end{definition}

This section shows there exist both proper and trivial non-block points but there is no such thing as a trivial shore point. Thus being a proper non-block point is a meaningful notion but being a proper shore point is not.
Hereditarily indecomposable metric continua provide an example of the former.

\begin{lemma}
Suppose $M$ is a hereditarily indecomposable metric continuum. 
Each $p \in M$ is a trivial non-block point.
\end{lemma}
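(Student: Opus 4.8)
The plan is to check the two clauses of the definition separately: that $p$ is a non-block point of $M$, and that $p$ is \emph{not} a proper non-block point. Two features of a hereditarily indecomposable continuum do the work. First, any two subcontinua of $M$ that share a point are nested, so for each $a\in M$ the family $\{K\in C(M):a\in K\}$ is a chain under inclusion. Second, $\K(p)$ is by definition the union of \emph{all} proper subcontinua of $M$ containing $p$, so every proper subcontinuum of $M$ containing $p$ lies inside $\K(p)$; and since $M$ is metric and indecomposable it has $\cn$ composants, which are pairwise disjoint, so in particular there is more than one.

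For the non-block clause I would fix a point $a$ with $\K(a)\ne\K(p)$, which exists because $M$ has at least two composants, and let $D$ be the continuum component of $a$ in $M-p$ (note $a\ne p$). Then $D$ is a nonempty semicontinuum, so $\overline D$ is a subcontinuum of $M$, and the boundary bumping corollary gives $p\in\overline D$. If $\overline D$ were proper it would be a proper subcontinuum containing $p$, hence $\overline D\subseteq\K(p)$, forcing $a\in\K(p)$ and so $\K(a)=\K(p)$ by disjointness of composants --- against the choice of $a$. Therefore $\overline D=M$, so $D$ is a dense continuum component of $M-p$ and $p$ is a non-block point. (This step uses only that $M$ is metric and indecomposable.)

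For the second clause I would show that no continuum component of $\K(p)-p$ is dense. Fix $a\in\K(p)-p$ and let $D$ be its continuum component in $\K(p)-p$. Among the subcontinua of $M$ containing $a$ --- a chain --- those that also contain $p$ form a nonempty nested subfamily (it contains $M$), so their intersection $Q$ is a subcontinuum with $a,p\in Q$; moreover $Q$ is proper, because $a\in\K(p)$ supplies a proper subcontinuum containing both $a$ and $p$, which must then contain $Q$. Now take any $b\in D$; since $D$ is a semicontinuum there is a subcontinuum $K\subseteq D\subseteq\K(p)-p$ with $a,b\in K$, so $p\notin K$. The subcontinua $K$ and $Q$ share the point $a$, hence are nested, and since $p\in Q$ while $p\notin K$ we cannot have $Q\subseteq K$; thus $K\subseteq Q$ and $b\in Q$. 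Hence $D\subseteq Q$, a proper subcontinuum of $M$, so $D$ is not dense. As $a$ was arbitrary, $\K(p)-p$ has no dense continuum component, so $p$ is not a proper non-block point. Combined with the previous paragraph this makes $p$ a trivial non-block point, and $p\in M$ was arbitrary.

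I expect the second clause to be the delicate part. It is tempting to observe that $\K(p)-p$ is connected --- it is the nested union, over the chain of proper subcontinua $K$ through $p$, of the connected sets $K-p$ --- and to infer that it is a single continuum component; but connectedness does not make a set a semicontinuum, and the lemma is in effect the statement that this single-component conclusion fails. The genuine mechanism is the chain argument above, which confines each continuum component of $\K(p)-p$ inside the minimal subcontinuum $Q$ irreducible about $\{a,p\}$; locating that $Q$ and verifying it is proper is the crux.
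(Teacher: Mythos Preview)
Your proof is correct and follows essentially the same route as the paper: use the existence of a second composant to witness that $p$ is a non-block point, then use the nesting property of hereditarily indecomposable continua to trap each continuum component of $\K(p)-p$ inside a proper subcontinuum through $p$. The only cosmetic difference is that you build the minimal such subcontinuum $Q$ as a nested intersection, whereas the paper simply picks any proper subcontinuum $K$ containing $p$ and the chosen point --- your $Q$ works but is more machinery than needed, since any such $K$ already absorbs every subcontinuum of $D$ through $a$ by the same nesting argument.
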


\begin{proof}
Recall $M$ has $\cn$ many pairwise disjoint composants each of which is a dense semicontinuum \cite{Ccomposants}.
Each composant other than $\K(p)$ witnesses how $p$ is a non-block point.
It remains to show each continuum component $C$ of $\K(p)-p$ is nowhere dense.

Let $q \in C$ be arbitrary.
We can write $C = \bigcup \cC$ as a union of proper subcontinua with $q \in D$ for each $D \in \cC$.
Since $q \in \K(p)$ there exists a proper subcontinuum $K$ with $\{p,q\} \subset K$.
Clearly $K$ meets but is not contained in each $D \in \cC$.
Hereditary indecomposability implies each $D \subset K$ and therefore $C = \bigcup \cC \subset K$.
Since $X$ is indecomposable $K$ and thus $C$ is nowhere dense as required.
\end{proof}

Observe we only required $M$ to have more than one composant.
The problem is open whether there exists a hereditarily indecomposable Hausdorff continuum
with exactly one composant. 
For obstructions to finding such a space see Smith 
\cite{SmithHIRemainderProd, SmithHILexProd, SmithHISouslinArcsIL, SmithHISouslinProd}. 

For $\K(p) = X$ clearly the shore point $p$ is proper.
Henceforth assume  $X$ is irreducible about some $\{p,q\}$.
We first treat the case when $X$ is decomposable.

\begin{lemma}\label{dec}
Suppose $X$ is decomposable and irreducible about $\{p,q\}$. 
Then $p$ and $q$ are proper non-block (shore) points.
\end{lemma}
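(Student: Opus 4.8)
The plan is to prove that $p$ is a proper non-block point; the case of $q$ then follows by the same argument with the roles of $p$ and $q$ (and of the pieces $X_1,X_2$ below) interchanged, and the parenthetical shore assertion is automatic once the non-block one is in hand: if $\K(p)-p$ has a dense continuum component $\Gamma$, then for any open $U_1,\dots,U_N$ I may pick $x_n\in\Gamma\cap U_n$ by density, and since $\Gamma$ is a semicontinuum these finitely many points lie in a common subcontinuum $K\subseteq\Gamma\subseteq\K(p)-p$, which meets every $U_n$.

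First I would pin down the decomposition. Because $X$ is irreducible about $\{p,q\}$, no proper subcontinuum contains both points; so in any decomposition $X=X_1\oplus X_2$ we may relabel so that $p\in X_1\setminus X_2$ and $q\in X_2\setminus X_1$. Since $X\setminus X_2$ is open and contained in $X_1$ we get $p\in X_1^\circ$, and symmetrically $q\in X_2^\circ$. Write $A=X_1$ and $B=X_2$. Note $A$ is a proper nonempty closed subset of the connected space $X$, so $\partial A\ne\varnothing$; moreover $\partial A\subseteq A$ avoids $p$ (as $p\in A^\circ$), while $\partial A\subseteq\overline{X-A}\subseteq B$ avoids $q$ (as $q\notin A$). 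Fix a point $b\in\partial A$; thus $b$ lies in $A\cap B$ and is distinct from $p$ and $q$.

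The core claim is that the continuum component $D$ of $X-\{p,q\}$ containing $b$ is a dense continuum component of $\K(p)-p$. To see $D\subseteq\K(p)-p$: let $L_1$ be the continuum component of $A-p$ containing $b$; boundary bumping inside $A$ gives $p\in\overline{L_1}$, and since $\overline{L_1}$ is a subcontinuum contained in the proper set $A$, it is a proper subcontinuum through $p$, so $\overline{L_1}\subseteq\K(p)$. Now for arbitrary $x\in D$ choose a subcontinuum $K_0\subseteq D$ with $b,x\in K_0$; then $K_0\cup\overline{L_1}$ is a subcontinuum containing $p$ and $x$ but missing $q$, hence a proper subcontinuum through $p$, so $x\in\K(p)$. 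As $D$ misses $p$, this shows $D\subseteq\K(p)-p$, and then $D$ is a continuum component of $\K(p)-p$ by maximality. To see $\overline D=X$: we already have $p\in\overline{L_1}\subseteq\overline D$; letting $L_2$ be the continuum component of $B-q$ containing $b$, boundary bumping inside $B$ gives $q\in\overline{L_2}$, and $L_2\subseteq D$ forces $q\in\overline D$; so $\overline D$ is a subcontinuum containing $\{p,q\}$, which irreducibility forces to equal $X$. This proves $p$ is a proper non-block point.

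The step I expect to be the main obstacle, and the only place decomposability is genuinely used, is forcing a \emph{single} continuum component of $\K(p)-p$ to reach all the way out to $q$: soft arguments give only that $\K(p)-p$ is dense, which does not suffice. The point $b\in\partial A$ is exactly what bridges the gap, since it lies in $A$ (so its component is drawn back toward $p$ and lands inside $\K(p)$) while simultaneously lying in $B$ with $q\in B^\circ$ (so boundary bumping inside $B$ drives its component out to $q$). The remaining points — that a continuum component is a connected semicontinuum, that a finite union of pairwise overlapping subcontinua is again a subcontinuum, and that $\overline{L_1}$ is proper because it sits inside the proper set $A$ — are routine and I would leave them implicit.
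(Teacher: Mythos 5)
Your proof is correct and follows essentially the same route as the paper's: write $X=A\cup B$ with $p\in A\subset X-q$ and $q\in B\subset X-p$, take a point of $A\cap B$, apply boundary bumping to the continuum components of $A-p$ and $B-q$ through that point, and use irreducibility to get density and the avoidance of $q$ to get containment in $\K(p)$. The only cosmetic differences are that you take the basepoint in $\partial A$ (any point of $A\cap B$ already avoids $p$ and $q$) and package the witness as the continuum component of $X-\{p,q\}$ rather than as the union of the two bumped components.
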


\begin{proof}

By assumption we can write $X = A \cup B$ as the union of two proper subcontinua.
Since $X$ is irreducible we have without loss of generality $p \in A \subset X-q$ and and $q \in B \subset X-p$.

Choose any $x \in A \cap B$. It follows from boundary bumping that $p$ is in the closure of the continuum component $C$ of $x$ in $A-p$.
Likewise $q$ is in the closure of the continuum component $D$ of $x$ in $B-q$.
Then $x \in C \cup D \subset X-\{p,q\}$ and $C \cup D$ is continuumwise connected.

Observe the subcontinuum $\overline {C \cup D}$ contains $\{p,q\}$.
By irreducibility we have $\overline {C \cup D} = X$ hence $C \cup D$ is dense.
To see $C \cup D \subset \K(p)$ recall $p \in A \subset X-q$ hence $A \cup D \subset \K(p)$. 
Therefore the subset $C \cup D$ of $A \cup D$ witnesses how $p$ is a proper non-block point. By symmetry the same argument applies to $q$. 
\end{proof}

We now deal with indecomposable $X$.
The definition allows finer control over why a given $p \in X$ fails to be a shore point.

\begin{definition}
Suppose $p \in X$ and $\cU = \{U_1,\ldots , U_{n+1}\}$ are open subsets of $X$.
We say $p$ \textit{disrupts} $\cU$ to mean no continuum component of $\K(p)-p$ meets all $U_1,\ldots , U_{n+1}$.
We say $p$ \textit{trivially disrupts} $\cU$ to mean there are distinct elements 
$U^1,U^2, \ldots, U^n \in \cU$ and nonempty open sets $V_i \subset U^i$ such that $p$ disrupts $\{V_1,\ldots V_n\}$.
Otherwise we say $p$ \textit{properly disrupts} $\cU$.
\end{definition}

For $p$ to properly disrupt $\cU = \{U_1,U_2, \ldots, U_n\}$ the definition requires $\cU$ be pairwise disjoint.
It is also clear the shore point $p\in X$ being trivial is equivalent to disrupting some nondegenerate family.
By boundary bumping no $p \in X$ can disrupt a family with only one element.
So if $p$ disrupts $\cU = \{U_1,U_2\}$ then $p$ properly disrupts $\cU$.

By induction it follows if $p$ disrupts $\cU = \{U_1,U_2, \ldots, U_n\}$
there exist $m \le n$ and elements $U^1,U^2, \ldots, U^m \in \cU$ and open sets $V_i \subset U^i$ such that $p$ properly disrupts $\{V_1,\ldots V_m\}$.
The next lemmas take advantage of that fact.

\begin{lemma}\label{proper}
Suppose $p \in X$ properly disrupts $\{U_1,\ldots, U_{n}\}$ and let $r \le n$ be fixed.
There exists a family $\cK(r)$ of subcontinua of $ \K(p) - p$ that satisfies both properties below.
\begin{enumerate}
	\item Each $K \in \cK(r)$ meets $U_m$ for each $m \ne r$.
	\item $\bigcup \cK(r)$ is dense in $U_m$ for each $m \ne r$.
\end{enumerate}
\end{lemma}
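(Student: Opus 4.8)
The plan is to prove Lemma~\ref{proper} by leveraging the structure of an indecomposable continuum together with the hypothesis that $p$ \emph{properly} disrupts $\{U_1,\ldots,U_n\}$. Fix $r\le n$. Since $p$ properly disrupts the family, the family is pairwise disjoint and $p$ does not trivially disrupt it; in particular the sub-family $\{U_m : m\ne r\}$ cannot be shrunk to nonempty open sets $\{V_m\subset U_m\}$ that $p$ disrupts — otherwise $p$ would trivially disrupt $\{U_1,\ldots,U_n\}$ via those $n-1$ sets. So $p$ does \emph{not} disrupt $\{V_m : m\ne r\}$ for any choice of nonempty open $V_m\subset U_m$, which unwound means: for every choice of nonempty open $V_m\subset U_m$ ($m\ne r$), some continuum component of $\K(p)-p$ meets every $V_m$. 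That is the engine.

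Next I would build $\cK(r)$ as follows. For each point $x$ lying in some $U_m$ with $m\ne r$, and each basic open neighbourhood $W\subset U_m$ of $x$, apply the observation above with $V_m := W$ and $V_{m'} := U_{m'}$ for the other indices $m'\ne r$: this yields a continuum component $C$ of $\K(p)-p$ meeting $W$ and every other $U_{m'}$; since $C$ is a semicontinuum containing points that are joined by subcontinua, and in fact it suffices to take a single subcontinuum $K\subset C\subset\K(p)-p$ joining a chosen point of $W$ to chosen points of each $U_{m'}$ — such a $K$ exists because $C$ is a continuum component hence continuumwise connected, and finitely many points of $C$ lie in a common subcontinuum of $C$. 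Collect all such $K$ over all $x$, all $m\ne r$, and all basic neighbourhoods $W$; call this family $\cK(r)$. By construction each $K\in\cK(r)$ is a subcontinuum of $\K(p)-p$ meeting every $U_m$ with $m\ne r$, giving property~(1). For property~(2): given any $m\ne r$ and any nonempty open $V\subset U_m$, pick $x\in V$ and a basic $W$ with $x\in W\subset V$; the corresponding $K\in\cK(r)$ meets $W\subset V$, so $\bigcup\cK(r)$ meets $V$. Hence $\bigcup\cK(r)$ is dense in $U_m$.

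The step I expect to be the main obstacle is the passage from "a continuum component $C$ of $\K(p)-p$ meets $W$ and each $U_{m'}$" to "a single subcontinuum $K\subset\K(p)-p$ meets $W$ and each $U_{m'}$." A continuum component is by definition a maximal semicontinuum, so any finite set of its points — here one point in $W$ and one in each $U_{m'}$, $n-1$ points total — lies in a common subcontinuum of $C$ by iterating the definition of semicontinuum (union of two subcontinua sharing a point is a subcontinuum). That common subcontinuum is the desired $K$, and it automatically lies in $C\subset\K(p)-p$. I would state this finite-union fact explicitly, perhaps as a one-line remark, since it is the crux that turns the non-disruption hypothesis into honest subcontinua. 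The remaining bookkeeping — that $\{U_m:m\ne r\}$ being shrinkable would contradict proper disruption — is the only place the precise wording of "trivially disrupts" is used, and I would write that implication out carefully rather than leave it to the reader.
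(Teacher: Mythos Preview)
Your argument is correct and follows the same line as the paper's proof: use that proper disruption of $\{U_1,\ldots,U_n\}$ forbids $p$ from disrupting any shrunk subfamily $\{V_m:m\ne r\}$, hence a continuum component of $\K(p)-p$ meets all the $V_m$, and extract a single subcontinuum from that component. The paper does exactly this (with all $V_m$ shrunk simultaneously rather than one at a time), though it silently passes from ``continuum component meets all $V_m$'' to ``subcontinuum meets all $V_m$''; you are right to flag that passage and your finite-union justification is the correct one. One small correction: your opening sentence invokes ``the structure of an indecomposable continuum,'' but Lemma~\ref{proper} makes no indecomposability assumption and your argument never uses it --- indecomposability enters only in the subsequent Lemma~\ref{indec} --- so that phrase should be dropped.
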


\begin{proof}
Without loss of generality $r=1$.
For each $m  \ne 1$ let $V_m \subset U_m$ be an arbitrary open subset.
Since $p$ properly disrupts $\{U_1,\ldots, U_{n}\}$ it cannot disrupt the family $ \{V_2,\ldots ,V_{n}\}$.
That means there is a subcontinuum \mbox{$K \subset \K(p)-p$} that meets each of $V_2,\ldots , V_{n}$.
Now let $V_2, \ldots , V_n$ range over the open subsets of $U_2, \ldots , U_n$. 
The union of all continua $K$ is dense in each $U_m$.
\end{proof}

\begin{lemma}\label{indec}
Suppose $X$ is indecomposable. Each shore point $p \in X$ is a proper shore point.
\end{lemma}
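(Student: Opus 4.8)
The plan is to argue by contradiction: suppose $p$ is a shore point but not a proper shore point, so $p$ is a trivial shore point and hence disrupts some family $\cU = \{U_1, \ldots, U_{n+1}\}$ of open sets. By the inductive remark preceding Lemma \ref{proper}, after passing to smaller open sets inside the $U_i$ we may assume $p$ \emph{properly} disrupts a family $\{U_1, \ldots, U_m\}$ with $m \ge 2$ (the case $m=1$ being excluded by boundary bumping). The key tension to exploit is this: because $p$ is a shore point, for \emph{every} finite collection of open sets there is a subcontinuum of $X - p$ meeting all of them, whereas proper disruption of $\{U_1, \ldots, U_m\}$ says no subcontinuum of $\K(p) - p$ meets all of $U_1, \ldots, U_m$ simultaneously. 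So any witnessing subcontinuum $K \subset X - p$ meeting all the $U_i$ must \emph{leave} the composant $\K(p)$; equivalently $K \cup \{p\}$ is not a proper subcontinuum, so $\overline{K \cup \{p\}} = X$, i.e. $K$ is dense.

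First I would use Lemma \ref{proper} with $r = m$: it produces a family $\cK(m)$ of subcontinua of $\K(p) - p$, each meeting $U_1, \ldots, U_{m-1}$, whose union is dense in each of $U_1, \ldots, U_{m-1}$. The idea is to ``repair'' a hypothetical dense subcontinuum $K \subset X - p$ witnessing the shore condition for $\{U_1, \ldots, U_m\}$ by grafting onto it pieces from $\cK(m)$, landing a subcontinuum that meets all of $U_1, \ldots, U_m$ but stays inside $\K(p) - p$ — contradicting proper disruption. The mechanism: since $K$ is dense it meets every member of $\cK(m)$'s defining open sets; more usefully, I would run the shore condition not on $\{U_1, \ldots, U_m\}$ but on a cleverly chosen finite family that forces the witnessing continuum to pass near a point $x$ lying in $\overline{\bigcup \cK(m)}$, so that $x$ is a limit both of $K$ and of continua in $\K(p)$. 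Then boundary-bumping applied to $X - p$ shows $x$'s continuum component in $X - p$ — which contains all of $K$ — has $p$ in its closure, and one argues this component actually lies in $\K(p)$.

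More precisely, I would look at the continuum component $E$ of $p$ within the closed set $X - U$ for suitable open $U$, or directly: let $C$ be the continuum component of $X - p$ containing a fixed point of $U_m$; since $p$ is a shore point, $C$ is dense (indeed one shows each continuum component reached by a witnessing $K$ is dense because $X$ is indecomposable and $\overline{K \cup \{p\}}$, being a subcontinuum containing $p$ and a point where $K$ is dense, must be all of $X$). By boundary bumping $p \in \overline{C}$. The crucial claim is $C \subset \K(p)$: any $y \in C$ is joined to a point near $U_m$ by a subcontinuum of $X - p$; I would show its closure together with $p$ is a proper subcontinuum containing $\{p, y\}$ — properness coming from the fact that, were it all of $X$, the pieces from $\cK(m)$ combined with it would yield a subcontinuum of $\K(p) - p$ meeting $U_1, \ldots, U_m$. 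Granting $C \subset \K(p)$, then $C$ is a dense continuum component of $\K(p) - p$ meeting $U_m$; combining $C$ with the continua of $\cK(m)$ (which meet $U_1, \ldots, U_{m-1}$ and whose union is dense, hence meets $C$) produces a single subcontinuum of $\K(p) - p$ meeting every $U_i$, contradicting the disruption of $\{U_1, \ldots, U_m\}$.

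The main obstacle I anticipate is the claim $C \subset \K(p)$ — i.e. controlling the composant membership of the dense continuum component $C$ of $X - p$. The difficulty is that indecomposability gives that proper subcontinua are nowhere dense, but a priori a witnessing continuum $K$ could be dense precisely because it escapes $\K(p)$, and I must rule out that the continuum components of $X - p$ that meet the $U_i$ are forced out of $\K(p)$. I expect to resolve this by a careful choice of the finite open family fed to the shore condition — including small sets around a point $x$ where $\bigcup \cK(m)$ is dense — so that the witnessing continuum $K$ can be merged with a member of $\cK(m)$ through $x$ \emph{before} taking closures, keeping everything in a proper (hence nowhere dense) subcontinuum off of which $p$ sits, and then iterating/taking a limit to extract the required dense continuum component inside $\K(p) - p$. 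Making the limiting argument yield a genuine continuum component rather than merely a semicontinuum, in the non-metric setting, is the delicate point.
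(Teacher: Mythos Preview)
Your proposal has a genuine error at its foundation. You claim that a witnessing subcontinuum $K \subset X - p$ meeting all the $U_i$ must be dense, reasoning that ``$K \cup \{p\}$ is not a proper subcontinuum, so $\overline{K \cup \{p\}} = X$.'' But $K$ is closed and $p \notin K$, so $K \cup \{p\}$ is not even connected; it fails to be a subcontinuum for that reason, not because its closure is all of $X$. Indeed, in an indecomposable continuum every proper subcontinuum is nowhere dense, so $K$ is nowhere dense --- the opposite of what you assert. The same mistake recurs when you argue that the continuum component $C$ is dense. Without density your merging plan collapses: if $C$ is taken through a point of $\K(p)$ then $C \subset \K(p) - p$ and disruption already forces $C$ to miss some $U_i$; if $C$ is taken through a point outside $\K(p)$ then $C$ lies in a different composant and is disjoint from every element of $\cK(m) \subset \K(p)$, so no grafting is possible.

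The paper's argument is shorter and never invokes the shore-point hypothesis. Apply Lemma~\ref{proper} with $r = 1$ to obtain the family $\cK(1)$. For each $K \in \cK(1)$ let $C(K)$ be the continuum component of $K$ in $\K(p) - p$; since $K$ already meets $U_2, \ldots, U_n$, disruption forces $C(K) \cap U_1 = \varnothing$, and boundary bumping gives $p \in \overline{C(K)} \subset X - U_1$. Then $S = \bigcup\big\{\overline{C(K)} : K \in \cK(1)\big\}$ is a semicontinuum through $p$ contained in $X - U_1$, so $\overline{S}$ is a \emph{proper} subcontinuum. But property~(2) of Lemma~\ref{proper} gives $U_2 \subset \overline{S}$, so $\overline{S}$ has nonempty interior --- impossible in an indecomposable continuum. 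The idea you missed is not to manufacture a subcontinuum of $\K(p) - p$ meeting every $U_i$, but to manufacture a proper subcontinuum of $X$ with interior and contradict indecomposability directly.
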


\begin{proof}
Suppose to the contrary that $p \in X$ is a trivial shore point.
That means $p$ disrupts some family $\cU = \{U_1, \ldots U_n\}$ of open subsets.
By discarding some elements of $\cU$ if necessary we can assume $p$ properly disrupts $\cU$.
Let $\cK(1)$ be a family of subcontinua of $ \K(p)-p$ as described in Lemma \ref{proper}.

Each $K \in \cK(1)$ has a continuum component $C(K)$ in $\K(p)-p$.
Since $p$ disrupts $\cU$ we know $C(K)$ is disjoint from $U_1$.
Recall $\overline {C(K)}$ is a subcontinuum.
It follows from boundary bumping $p \in \overline {C(K)} \subset X - U_1$.

As $K$ ranges over the elements of $\cK(1)$ the set $S = \bigcup \big \{\overline {C(K)}: K \in \cK(1) \big \}$ constitutes a semicontinuum with $p \in S \subset X - U_1$.
Therefore \mbox{$\overline S \subset X-U_1$} is a proper subcontinuum.
But Property (2) of Lemma \ref{proper} says $U_2 \subset \overline S$.
Thus \mbox{$\overline S \subset X$} is a proper subcontinuum with nonvoid interior.
Since $X$ is indecomposable this cannot occur. We conclude $p$ is a proper shore point.
\end{proof}

Theorem 2 follows from Lemmas \ref{dec} and \ref{indec}.

\begin{thm3}\label{theorem2}
Shore points are the same as proper shore points.
\end{thm3}

\section{Continua Without Shore Points}

\noindent 
The proof of Theorem \ref{equiv} says we can build a continuum $X$ without shore points by assuming
NCF and gluing two copies of $\HH^*$ together at a single point.
This section is about whether arbitrary continua $X$ without shore points come about this way 
$-$ by joining together continua without non-block points.
We first show $X$ is decomposable.


\begin{lemma}\label{ndecomp}
Suppose $p \in X$ is not a shore point. 
There is a decomposition $X = X_1  \oplus \ldots \oplus X_n$ with $p \in X_1 \cap \ldots \cap X_n$.
\end{lemma}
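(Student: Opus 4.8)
The plan is to extract a finite decomposition directly from the fact that $p$ fails to be a shore point. Since $p$ is not a shore point, there is a finite collection of open sets $U_1,\ldots,U_N \subset X$ such that no subcontinuum $K \subset X-p$ meets every $U_m$. Fix such a collection of minimal size $N$; note $N \ge 2$ by boundary bumping (a single open set is always met by a subcontinuum of $X-p$, since $p$ lies in the closure of each continuum component of $X-p$, so some such component meets $U_1$). For each $m$, let $\cA_m$ be the union of all subcontinua of $X-p$ that meet $U_m$, together with $\{p\}$; I would show each $\overline{\cA_m}$ is a proper subcontinuum of $X$ containing $p$, and that these pieces cover $X$ and form a genuine decomposition.

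First I would check $\overline{\cA_m}$ is connected and contains $p$: every subcontinuum of $X-p$ meeting $U_m$ has $p$ in its closure (boundary bumping applied to its continuum component in $X-p$), so $\cA_m$ is a semicontinuum through $p$, hence $\overline{\cA_m}$ is a subcontinuum with $p \in \overline{\cA_m}$. Next, properness: if $\overline{\cA_m} = X$ then in particular $\overline{\cA_m}$ meets every $U_j$; I would want to promote this to a single subcontinuum of $X-p$ meeting all the $U_j$, contradicting the choice of $U_1,\ldots,U_N$. The tool here is minimality of $N$ — by minimality, for each pair $j \ne m$ there \emph{is} a subcontinuum of $X-p$ meeting $U_m$ and $U_j$ simultaneously (otherwise $\{U_m,U_j\}$ would be a smaller bad family, using $N\ge 2$). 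Taking the union over $j$ of such subcontinua, all of which meet $U_m$, one builds a single semicontinuum of $X-p$ meeting every $U_j$; but a finite union of subcontinua of $X-p$ all sharing a point of $U_m$ is itself a subcontinuum of $X-p$, giving the contradiction. Hence each $\overline{\cA_m}$ is proper.

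Now set $X_m = \overline{\cA_m}$ for $m = 1,\ldots,N$. Each $X_m$ is a proper subcontinuum containing $p$. To see they cover $X$: any point $x \ne p$ lies in some continuum component $C$ of $X-p$; that component is a semicontinuum, so for any $U_m$ it meets (and it meets at least one, say because $X = \overline{X-p}$ and the $U_m$ cover enough of $X$ — here one should first arrange that $\bigcup U_m$ is chosen to cover $X$, which is harmless since enlarging the bad family keeps it bad) there is a subcontinuum of $C \subset X-p$ through $x$ meeting $U_m$, so $x \in \cA_m \subset X_m$. Finally, to get an honest decomposition one discards any $X_m$ contained in the union of the others and relabels; what remains is $X_{i_1} \oplus \cdots \oplus X_{i_k}$ with $k \ge 1$, and $k \ge 2$ because a single proper subcontinuum cannot equal $X$. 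All surviving pieces still contain $p$, giving $p \in X_1 \cap \cdots \cap X_n$ after relabeling.

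The main obstacle I anticipate is the properness argument — specifically the step that turns "$\overline{\cA_m}$ meets all $U_j$" into "a single subcontinuum of $X-p$ meets all $U_j$." One cannot take closures there (the closure reintroduces $p$), so the argument must stay inside $X-p$, and this is exactly where minimality of $N$ has to be used carefully: one needs, for each $j$, a subcontinuum of $X-p$ meeting both $U_m$ and $U_j$, and then the fact that finitely many subcontinua of $X-p$ with a common point in $U_m$ union to a subcontinuum of $X-p$. A secondary subtlety is ensuring the $U_m$ collectively "see" every continuum component of $X-p$ so that the $X_m$ really cover $X$; padding the bad family with extra open sets covering $X$ handles this, since any superset of a bad family is bad.
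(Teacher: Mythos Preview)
Your properness argument has a genuine gap, and it is exactly the step you flag at the end. You produce, for each $j \ne m$, a subcontinuum $K_j \subset X-p$ meeting both $U_m$ and $U_j$, and then assert that ``finitely many subcontinua of $X-p$ all sharing a point of $U_m$'' union to a subcontinuum. But nothing guarantees the $K_j$ share a \emph{common} point of $U_m$; they may hit $U_m$ in different continuum components of $X-p$, so $\bigcup_{j\ne m} K_j$ need not be connected. Worse, notice your contradiction never actually uses the hypothesis $\overline{\cA_m}=X$: if the argument worked it would apply to every minimal bad family of size $N\ge 3$ and show it is not bad after all. And $\overline{\cA_m}$ really can equal $X$: for a minimal bad family $\{U_1,U_2,U_3\}$ one may have continuum components $C_{12},C_{13},C_{23}$ of $X-p$ with $C_{ij}$ meeting $U_i,U_j$ but not the third set; then $\cA_1 \supset C_{12}\cup C_{13}$ already meets all three $U_j$ and can be dense. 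The covering step has the same flavour of trouble: padding the bad family to cover $X$ destroys the minimality you need elsewhere.

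The paper's proof avoids all of this by indexing the pieces the opposite way --- by the open set each piece \emph{misses} rather than the one it meets. After passing to a family that $p$ \emph{properly disrupts} (the paper's refinement of your minimality), Lemma~\ref{proper} supplies for each $r$ a family $\cK(r)$ of subcontinua of $\K(p)-p$ each meeting every $U_m$ with $m\ne r$; because $p$ disrupts the family, the continuum component of any such subcontinuum must miss $U_r$, so its closure lies in $X-U_r$. Hence $K(r)=\overline{\bigcup\cK(r)}$ is a subcontinuum with $p\in K(r)\subset X-U_r$ and $U_m\subset K(r)$ for $m\ne r$, and properness is automatic. Points whose continuum component in $X-p$ meets no $U_r$ are swept up into one further proper piece $C$, giving $X = C\cup K(1)\cup\ldots\cup K(n)$; redundant pieces are then discarded. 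The moral: build each piece to avoid a fixed $U_r$, so properness is free, rather than to meet a fixed $U_m$, where properness is the whole difficulty.
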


\begin{proof}
Since $p$ is not a shore point it disrupts some family $\cU = \{U_1, \ldots U_n\}$ of open subsets.
Like before, we can assume $p$ properly disrupts $\cU$.
For each $r \le n$ let $\cK(r)$ be a family of subcontinua of $\K(p)-p$ as described in Lemma \ref{proper}.

The proof of Lemma \ref{indec} shows each $K(r) = \overline{ \bigcup \cK(r) }$ is a subcontinuum 
with $p \in K(r) \subset X-U_r$ and $U_m \subset K(r)$ for each $m \ne r$. Let $\cL(r)$ be the family of all subcontinua with both properties. Without loss of generality we can assume $K(r) = \overline{ \bigcup \cL(r) }$ is the largest subcontinuum with both properties.


Define the subcontinuum $Y = K(1) \cup K(2) \cup \ldots \cup K(n)$
and write $C(x)$ for the continuum component of each $x \in X-Y$.
We claim each $C(x)$ meets no $U_r$. 
For suppose otherwise.
Then without loss of generality some $C(x)$ meets $U_1$.
Since $p$ disrupts $\{U_1, \ldots U_n\}$ we know $C(x)$ fails to meet at least one $U_m$.
Without loss of generality $m=2$.
Then $\overline{C(x)}$ is a subcontinuum with $x \in \overline{C(x)} \subset X-U_2$.

Now consider the subcontinuum $K(2) \cup \overline{C(x)}$.
We have $p \in K(2) \cup \overline{C(x)} \subset X-U_2$ and $U_m \subset K(2) \cup \overline{C(x)}$ for each $m \ne 2$.
Since we chose $K(2)$ to be maximal with both properties we must have $K(2) \cup \overline{C(x)} = K(2)$.
Hence $\overline{C(x)} \subset K(2)$ and $x \in K(2) \subset Y$ contrary to the assumption that $x \in X-Y$.
We conclude $C(x)$ meets no $U_r$. 

It follows for each $x \in X-Y$ that $\overline {C(x)}$ is a subcontinuum with $\{p,x\} \subset \overline {C(x)} \subset X - (U_1 \cup \ldots \cup U_n)$.
Define the subcontinuum $C = $ $\overline{ \bigcup \{C(x): x \in X-Y\}}$.
Then $\{p,x\} \subset C \subset X - (U_1 \cup \ldots \cup U_n)$.

It follows $X = C \cup K(1) \cup K(2) \cup \ldots \cup K(n)$
and $p$ is an element of each element of the covering.
By discarding any covering element contained in the union of the others we get the required decomposition.
\end{proof}

The next theorem follows from Theorem \ref{theorem2} and Lemma \ref{ndecomp}.

\begin{thm3}\label{indsp}
Every point of an indecomposable continuum is a proper shore point.
\end{thm3}

The proof of Theorem \ref{indsp} for the special case of $\HH^*$ uses standard techniques from the study of the Stone-\v Cech remainder.
An earlier paper of ours gave a lengthy argument for $\HH^*$, that considered separately two types of composants of $\HH^*$ and did not use Theorem \ref{theorem2}.
That paper was never published because the anonymous referee was able to give the much simpler proof we include here as Lemma \ref{Anon}. The second half of the proof is based on \cite{NCF2} Theorem 4.1.

We briefly recall the terminology from \cite{Me2}:
The {\it Stone-\v Cech remainder} $\HH^*$ is the set of nonprincipal ultrafilters of closed sets on the half open interval $\HH = [0, \infty)$. The topology on $\HH^*$ is generated by the sets $$U^* = \{\cD \in \HH^*: D \subset U\text{ for some }D \in \cD\}$$ as $U$ ranges over all open subsets of $\HH$. Note for $U$ bounded we have $U^* = \0$.
It is known that $\HH^*$ is an indecomposable Hausdorff continuum.

Suppose we have a nonprincipal ultrafilter $\cD$ on $\W$ and sequence of intervals $I_n = [a_n,b_n]$ with each $b_n < a_{n+1}$. For each subset $D \subset \W$ we write $I_D = \bigcup\{I_n:n \in D\}$ for the subset of $\HH$. We write $\overline {I_D}$ for the closure in $\beta \HH$. That means the collection of ultrafilters $\cD$ on $\HH$ with $I_D \in \cD$. We write $\II_\cD$ for the subset $ \bigcap \big \{\HH^* \cap \overline{I_D} : D \in \cD \big \}$ of $\HH^*$. Sets of the form $\II_\cD$ are known to be proper subcontinua of $\HH^*$ and are called {\it standard subcontinua}. For background on $\HH^*$ see \cite{CS1}. For background on Stone-\v Cech compactifications in general see \cite{uff} and \cite{CSbook}. 

\begin{lemma}\label{Anon}
Let $p \in \HH^*$ be arbitrary and $\cU = \{U_1, \ldots, U_n\}$ a family of open sets.
For each standard subcontinuum $\II_\cD$ with $p \in \II_\cD$ some other standard subcontinuum $\JJ_\cD \subset \K(p)$
has $p \notin \JJ_\cD$ but $\JJ_\cD$ meets each $U_m$.
In particular each point of $\HH^*$ is a shore point.
\end{lemma}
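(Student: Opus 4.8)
The plan is to produce, from the given standard subcontinuum $\II_\cD$ containing $p$, a second standard subcontinuum $\JJ_\cD$ that "shifts away" from $p$ along the tail structure while still meeting every $U_m$. The key structural fact about standard subcontinua is that $\II_\cD$ is determined by an ultrafilter $\cD$ on $\W$ together with a sequence of disjoint intervals $I_n = [a_n, b_n]$, and that every $\II_\cD$ is a proper subcontinuum of $\HH^*$ contained in $\K(p)$ whenever $p \in \II_\cD$ (since $\K(p)$ is the union of all proper subcontinua through $p$). So the first step is to fix the data $(\cD, (I_n))$ realising $\II_\cD$ with $p \in \II_\cD$.

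Next I would use the family $\cU$. Since $p \in \II_\cD$ and each $U_m$ is open with $U_m^* \neq \0$ (otherwise the statement is vacuous for that $U_m$), I would arrange, by shrinking or reindexing, that each open set $U_m$ has the form $I_{D_m}^*$ or at least contains a set of that form for some $D_m \in \cD$ — the point being that $U_m$ meets $\II_\cD$, so $U_m$ must contain points of $\HH^*$ in the closure of $I_D$ for cofinally many $n \in D$. The second step is then to build a single sequence of intervals $J_n$, with $J_n$ lying strictly to the right of $I_n$ (say $J_n \subset (b_n, a_{n+1})$), chosen so that $J_n$ still meets each $U_m$ for $n$ in the appropriate $\cD$-large set; concretely, since $U_m$ is open and meets $\overline{I_D}$ in $\HH^*$, one can slide a small subinterval into $U_m$ just past $b_n$. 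Taking $\JJ_\cD = \II_{\cD}$ built from the $J_n$ (same ultrafilter $\cD$, new interval sequence) gives a standard subcontinuum that meets each $U_m$ by construction, lies in $\K(p)$ because it is a proper subcontinuum, and — this is the crucial point — does not contain $p$, because $p$ "sees" the intervals $I_n$ (it is built from a filter containing the $I_D$'s) whereas $\JJ_\cD$ is built from the disjoint tails $J_n$, and two standard subcontinua from interval sequences that are eventually disjoint and separated are themselves disjoint, or at worst meet only in a way that $p$ can be excluded from. I would make this precise by noting that $p \in \overline{I_{D}}$ for all $D \in \cD$ forces $p \notin \overline{J_{D}}$ whenever the $J_n$ are separated from the $I_n$ by gaps going to infinity.

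I expect the main obstacle to be the verification that $p \notin \JJ_\cD$ while simultaneously keeping $\JJ_\cD$ inside $\K(p)$ and meeting every $U_m$ — these three requirements pull in slightly different directions, and the honest argument must exploit that $p$ is pinned to the left endpoints $a_n$ (or to the intervals $I_n$) by its ultrafilter. The cleanest route is probably to observe that $\II_\cD$ and $\JJ_\cD$, being standard subcontinua over the \emph{same} ultrafilter $\cD$ but over interval sequences with $b_n < (\text{left endpoint of } J_n)$ and the gap $\to \infty$, satisfy $\II_\cD \cap \JJ_\cD = \0$ outright (a known separation property of $\HH^*$ recorded in the cited references such as \cite{CS1, NCF2}), which gives $p \notin \JJ_\cD$ immediately. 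The containment $\JJ_\cD \subset \K(p)$ is then automatic since $\JJ_\cD$ is a proper subcontinuum of $\HH^*$ and any proper subcontinuum meeting a composant... — more carefully, one picks $\JJ_\cD$ so that some interval sequence joins it to $\II_\cD$ through $p$, i.e. interleave a third sequence $K_n$ with $K_n \supset I_n \cup J_n$ and gaps, giving a proper subcontinuum $\II_{\cD}^{K}$ containing both $p$ and $\JJ_\cD$, whence $\JJ_\cD \subset \K(p)$. Finally, "each point of $\HH^*$ is a shore point" follows because every $p \in \HH^*$ lies in \emph{some} standard subcontinuum (e.g. one can always realise $p$ inside an $\II_\cD$), and then Lemma~\ref{Anon} supplies, for each finite $\cU$, a subcontinuum $\JJ_\cD \subset \K(p) - p$ meeting each $U_m$ — which is exactly the (proper) shore point condition. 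This also re-proves a special case of Theorem~\ref{indsp}, consistent with $\HH^*$ being indecomposable.
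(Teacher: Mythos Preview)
The central gap is in your construction of the intervals $J_n$. You want $J_n \subset (b_n, a_{n+1})$ chosen so that the resulting $\JJ_\cD$ meets each $U_m$, but there is no reason the gaps between the $I_n$ should ``see'' the open sets $U_m$ at all. The $U_m$ are arbitrary nonempty open subsets of $\HH^*$ and are \emph{not} assumed to meet $\II_\cD$; indeed $\II_\cD$ is nowhere dense in the indecomposable continuum $\HH^*$, so typically they do not. Your step ``since $U_m$ is open and meets $\overline{I_D}$ in $\HH^*$, one can slide a small subinterval into $U_m$ just past $b_n$'' is therefore unjustified, and with it the whole construction of $\JJ_\cD$ collapses. (Your earlier aside that $\JJ_\cD \subset \K(p)$ ``because it is a proper subcontinuum'' is also wrong: $\HH^*$ can have many composants, so a proper subcontinuum need not lie in $\K(p)$. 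You later correct course with the $K_n \supset I_n \cup J_n$ idea, but that idea depends on having placed the $J_n$ in the gaps, which you cannot do.)

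The paper's argument is structurally different. It ignores the interval data of $\II_\cD$ when building $\JJ_\cD$ and instead produces \emph{two} disjoint candidates directly from the $U_m$. For $n=2$ one picks open $U,V \subset \HH$ with $U^* \subset U_1$ and $V^* \subset U_2$, chooses $c_1 < c_2 < \ldots$ so that each $[c_k,c_{k+1}]$ contains a component interval of $U$ and one of $V$, and sets $R_n=[c_{4n},c_{4n+1}]$, $L_n=[c_{4n+2},c_{4n+3}]$. Both $\RR_\cD$ and $\LL_\cD$ meet $U^*$ and $V^*$ by construction, and since they are disjoint $p$ lies in at most one of them; the other becomes $\JJ_\cD$. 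Because $\JJ_\cD$ was not built adjacent to the $I_n$, your enclosing trick is unavailable, and the containment $\JJ_\cD \subset \K(p)$ requires real work: after reducing to the case where the $I_n$ and $J_n$ are pairwise disjoint, the paper merges them into a single sequence $K_n$, writes $\II_\cD=\KK_{\alpha(\cD)}$ and $\JJ_\cD=\KK_{\beta(\cD)}$ for suitable injections $\alpha,\beta$, and invokes a coherence-of-filters lemma to obtain a finite-to-one monotone $f$ with $f(\alpha(\cD))=f(\beta(\cD))$, yielding a proper standard subcontinuum $\LL_{f(\alpha(\cD))}$ containing both $\II_\cD$ and $\JJ_\cD$.
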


\begin{proof}
We only consider $n=2$ and $\cU=\{U_1,U_2\}$ as the general case is similar.
First choose disjoint open $U,V \subset \HH$ with the basic open sets $U^* \subset U_1$ and $V^* \subset U_2$.
Then observe $U= A_1 \cup A_2 \cup \ldots$ is a disjoint union of open intervals and likewise for $V= B_1 \cup B_2 \cup \ldots$ .

Choose an increasing sequence $c_1<c_2 < \ldots$ in $\HH$ so each $[c_n,c_{n+1}]$ contains at least one $A_i$ and $B_j$.
It follows $c_n \to \infty$.
Define sequences $R_n = [c_{4n}, c_{4n+1}]$ and $L_n = [c_{4n+2}, c_{4n+3}]$.
By construction $\RR_\cD$ and $\LL_\cD$ are disjoint and each meets $U^*$ and $V^*$.
Therefore $p$ is an element of at most one of $\RR_\cD$ and $\LL_\cD$.
Without loss of generality $p \notin \RR_\cD$.

Taking $\JJ_\cD = \RR_\cD$ it remains to show $\JJ_\cD \subset \K(p)$.
In case $\JJ_\cD$ meets $\II_\cD$ the fact that $\HH^*$ is indecomposable says $\JJ_\cD \cup \II_\cD \ne \HH^*$ hence $\JJ_\cD \subset \K(p)$ as required. 
Otherwise $\II_\cD$ and $\JJ_\cD$ are disjoint. Recall 
$\II_\cD =\bigcap \big \{\HH^* \cap \overline{I_D} : D \in \cD \big \}$ and 
$\JJ_\cD =\bigcap \big \{\HH^* \cap \overline{J_D} : D \in \cD \big \}$ are intersections of closed subsets of the compact $\HH^*$. Since they are disjoint the Cantor property says $\HH^* \cap \overline{I_A}$ and $\HH^* \cap \overline{J_B}$ are disjoint for some $A,B \in \cD$. 

It follows from the definition of $\overline{I_A}$ and $\overline{I_B}$ that $I_A \cap J_B \subset \HH$ is compact. Since $A \cap B \in \cD$ and $I_{A \cap B} \cap J_{A \cap B} \subset I_A \cap J_B$ we also see $I_{A \cap B} \cap J_{A \cap B}$ is compact. That means the set

$$ C = \{n \in A \cap B: I_n \text{ meets } J_m \text{ for some } m \in A \cap B\}$$

is finite. Since $\cD$ is nonprincipal $C \notin \cD$ and so $D = (A \cap B - C) \in \cD$.

According to the definiton of $\II_\cD$ as in intersection, the subcontinuum $\II_\cD$ does not change if we redefine the intervals $I_n$ for all $n \in D^c$. Likewise for $\JJ_\cD$.
Hence we can redefine $I_n,J_n$ for $n \in D^c$ without changing $\II_\cD$ and $\JJ_\cD$ and hence assume $\II_\W \cap \JJ_\W$ is empty. 
Then we can combine the two sequences into some $\{K_n: n \in \W \}$ where each interval $K_n$ is some $I_m$ or $J_m$ and each $K_{n+1}$ is to the right of $K_n$. 

Define the maps $\A: \W \to \W$ by letting each $\A(n)$ be the unique $i \in \W$ with $I_n = K_i$. In other words $I_n = K_{\A(n)}$. Likewise define $\B: \W \to \W$ by $J_n = K_{\B(n)}$. It follows from the definitions that $\II_\cD = \mathbb K _{\A(\cD)}$ and $\JJ_\cD = \mathbb K _{\B(\cD)}$ where we define the ultrafilter $\A(\cD) = \{E \subset \W: \A^{-1}(E) \in \cD\}$ and likewise for $\B(\cD)$. 

Define a finite-to-one function $g$ separately over the disjoint sets $A= \{\A(n): n \in \W\}$ and $B= \{\B(n): n \in \W\}$ by $g(\A(n))= \A(n)$ and $g(\B(n))= \A(n)$. The definition $g(\B(n))= \A(n)$ makes sense because $\B$ is injective. From here it quickly follows $g(\A(\cD)) = \A(\cD) = g(\B(\cD))$. 	
From \cite{NCF1} Lemma 10 and the paragraph before that lemma, if there exists a finite-to-one function \mbox{$g: \W \to \W$} with $g(\A(\cD)) = g(\B(\cD))$ then there also exists a finite-to-one monotone function $f: \W \to \W$ with $f(\A(\cD)) = f(\B(\cD))$. 

Since $f$ is finite-to-one each $f^{-1}(n)$ is finite. Thus the convex hull $L_n$ of $\{K_m: m \in f^{-1}(n)\}$ is a closed interval. Since $f$ is monotone each $L_{n+1}$ is to the right of $L_n$. So we can define a standard subcontinuum $\LL_{f(\A(\cD))}$. We claim  $\KK_{\A(\cD)},\KK_{\B(\cD)} \subset \LL_{f(\A(\cD))}$. Since $\JJ_\cD = \KK_{\B(\cD)}$ this shows $\JJ_\cD  \subset \K(p)$.

To that end write $\A(\cD) = \cU$. First observe each $K_n \subset L_{f(n)}$ since $L_{f(n)}$ is the hull of $\{K_m: m \in f^{-1}(f(n))\}$ and we have $m \in f^{-1}(f(n))$ for $m=n$.  It follows $K_U \subset L_{f(U)}$ for each $U \in \cU$. Hence we can write

$$\KK_\cU = \bigcap_{U \in \cU} \HH^* \cap \overline{K_U} \subset \bigcap_{U \in \cU} \HH^* \cap \overline{L_{f(U)}}.$$

It follows from the definition of $f(\cU)$ and $\cU$ being an ultrafilter that $f(U) \in f(\cU)$ for each $U \in \cU$. In the other direction, for each $E \in f(\cU)$ we have $f^{-1}(E) \in \cU$ and so $E$ contains the set $f(U)$ for $U = f^{-1}(E)$. It follows $\{f(U): U \in \cU\}$ is a cofinal subset of $f(\cU)$. Hence the intersection on the right-hand-side equals

$$\bigcap_{W \in f(\cU)} \HH^* \cap \overline{L_{W}} = \LL_{f(\cU)}.$$

We conclude $\KK_{\A(\cD)} \subset \LL_{f(\A(\cD))}$. By symmetry the same holds for $\KK_{\B(\cD)}$. This completes the proof.
\end{proof}

We would like to show our example of \textit{spot-welding} two copies of $\HH^*$ is generic in the following sense.

\begin{cnj3}
Suppose $X$ has no shore points and $p \in X$.
There is a decomposition $X = X_1  \oplus \ldots \oplus X_N$ with $p \in X_1 \cap \ldots \cap X_N$
and $p$ non-coastal when treated as a point of each $X_n$.
\end{cnj3}

Conjecture 4 asks for a particularly \textit{nice} decomposition of $X$. One variant of the conjecture is that every decomposition with $p \in X_1 \cap \ldots \cap X_N$ is \textit{nice}. The stronger conjecture however is false.

For a counterexample assume NCF and take two copies $H_1$ and $H_2$ of $\HH^*$ and 
identify points $x_1 \in H_1$ and $x_2 \in H_2$ respectively
with the endpoints $0$ and $1$ of the arc.
Denote by $X$ the quotient space.

Observe for $X_1 = H_1 \cup [0,1]$ and $X_2 = [0,1] \cup H_2$ we have the decomposition $X = X_1 \oplus X_2$ but the semicontinua $H_1 \cup [0, 1)$ and $ (0, 1] \cup H_2$
witness how $p = 1/2$ is a coastal point of each element.
Thus the stronger conjecture fails.

To see the weaker conjecture holds consider the second choice of decomposition 
$X_1 = H_1 \cup [0,1/2]$ and $X_2 = [1/2,1] \cup H_2$. By the same reasoning as in Theorem 1 we see $p=1/2$ is a coastal point of neither element.

The second decomposition above is \textit{minimal} in the following sense.

\begin{definition}
Suppose $X = X_1  \oplus \ldots \oplus X_N$ and $X = Y_1  \oplus \ldots \oplus Y_N$ are decompositions. We write $Y_1  \oplus \ldots \oplus Y_N \le X_1  \oplus \ldots \oplus X_N$ to mean there is a permutation $\sigma$ 
of $\{1,2,\ldots, N\}$ with each  $Y_n \subset X_{\sigma(n)}$.
We call a decomposition \textit{minimal} to mean it is minimal with respect to this partial order.
\end{definition}

For example, each minimal decomposition $X_1 \oplus X_2$ of the arc has $X_1 \cap X_2$ a singleton.
Each minimal decomposition $X_1 \oplus X_2$ of the circle has $X_1 \cap X_2$ a doubleton.
For $X$ formed by \textit{spot-welding} finitely many indecomposable continua the natural decomposition is minimal.
It may prove useful that minimal decompositions always exist.

\begin{lemma}
Each decomposition is $\le$-above some minimal decomposition.
\end{lemma}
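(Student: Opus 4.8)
The plan is to prove the statement by induction on the number $N$ of blocks, using a Zorn's-lemma / descent argument at each stage. The key observation is that the relation $\le$ on $N$-block decompositions is a partial order (this requires a small check: antisymmetry follows because if $Y_1 \oplus \ldots \oplus Y_N \le X_1 \oplus \ldots \oplus X_N$ and vice versa, the chain of inclusions $Y_n \subset X_{\sigma(n)} \subset Y_{\tau\sigma(n)} \subset \ldots$ forces equality since there are only finitely many blocks). So it suffices to show every descending chain of $N$-block decompositions has a lower bound that is again an $N$-block decomposition; then Zorn gives a minimal element below any given decomposition.

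**First I would** set up the chain. Let $\{\,X_1^{(\lambda)} \oplus \ldots \oplus X_N^{(\lambda)} : \lambda \in \LA\,\}$ be a descending chain, where we have fixed the indexing so that $X_n^{(\mu)} \subset X_n^{(\lambda)}$ whenever $\mu \ge \lambda$ (the permutations can be absorbed into the indexing because a descending chain, being totally ordered, admits a coherent labelling — this is the one slightly delicate bookkeeping point). For each $n$ define $Y_n = \bigcap_{\lambda} X_n^{(\lambda)}$, or rather, since a nested intersection of subcontinua of a compact Hausdorff space is a subcontinuum, $Y_n \in C(X)$. I then need to verify three things: (i) $Y_1 \cup \ldots \cup Y_N = X$; (ii) no $Y_n$ is contained in the union of the others; and (iii) $Y_1 \oplus \ldots \oplus Y_N \le X_1^{(\lambda)} \oplus \ldots \oplus X_N^{(\lambda)}$ for every $\lambda$, which is immediate from $Y_n \subset X_n^{(\lambda)}$.

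**The hard part will be** (i), that the $Y_n$ still cover $X$. Fix $x \in X$. For each $\lambda$ the set $S_\lambda = \{\,n : x \in X_n^{(\lambda)}\,\}$ is nonempty, and $\mu \ge \lambda$ implies $S_\mu \subset S_\lambda$; since these are subsets of the finite set $\{1,\ldots,N\}$ and the chain is directed downward, the intersection $\bigcap_\lambda S_\lambda$ is nonempty — pick $n$ in it, and then $x \in X_n^{(\lambda)}$ for all $\lambda$, so $x \in Y_n$. For (ii), suppose $Y_n \subset \bigcup_{m \ne n} Y_m$. Each $X_n^{(\lambda)}$ is not contained in $\bigcup_{m \ne n} X_m^{(\lambda)}$, so there is a point $z_\lambda \in X_n^{(\lambda)} \setminus \bigcup_{m\ne n} X_m^{(\lambda)}$; by compactness a subnet of $(z_\lambda)$ converges to some $z$, which lies in every $X_n^{(\lambda)}$ (hence in $Y_n$) and in the closed set $X \setminus \bigcap_\mu(\bigcup_{m \ne n} X_m^{(\mu)}) $ — one checks $z \notin Y_m$ for any $m \ne n$ by an argument symmetric to (i) applied to the complement, giving a contradiction. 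Then Zorn's lemma finishes: the partial order on $N$-block decompositions below the given one has the property that every chain has a lower bound, so a minimal element exists. I would write out (i) carefully and treat (ii) and the Zorn invocation briskly.
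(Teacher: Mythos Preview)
Your overall plan --- apply Zorn's lemma by showing every descending chain of $N$-block decompositions has an $N$-block lower bound obtained by intersecting --- is exactly the paper's strategy, and your argument for (i) is essentially the paper's cofinality argument in different clothing. The problems lie in the two places you flagged as routine.

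First, the ``slightly delicate bookkeeping'' of absorbing the permutations into a coherent labelling is not just bookkeeping: you have not said why a coherent choice of permutations along the whole chain exists. The paper's device for this is to fix, at the top element of the chain, witness points $x_n \in X^1(n) - \bigcup_{m\ne n} X^1(m)$. Since any lower decomposition covers $X$ and each of its blocks sits inside a unique block of the top (the one containing the corresponding $x_n$), the witness points pin down the permutations $\sigma_i$ uniquely and compatibly. Without something like this, the claim that the labelling can be made monotone along the chain is unjustified.

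Second, your argument for (ii) has a genuine gap. From $z_\lambda \in X_n^{(\lambda)} \setminus \bigcup_{m\ne n} X_m^{(\lambda)}$ you correctly get that any cluster point $z$ lies in $Y_n$, since for each fixed $\lambda_0$ the tail $\{z_\mu : \mu \ge \lambda_0\}$ is contained in the closed set $X_n^{(\lambda_0)}$. But the complementary conclusion $z \notin Y_m$ does \emph{not} follow: for fixed $\lambda_0$ you only know $z_\mu \notin X_m^{(\lambda_0)}$ when $\mu \le \lambda_0$, which controls the initial segment of the net, not the tail, so the limit could easily fall into every $X_m^{(\lambda)}$. The vague appeal to ``an argument symmetric to (i) applied to the complement'' does not repair this. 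In the paper the witness points solve (ii) for free: each $x_n$ lies in every $X^i(\sigma_i(n))$ and in no $X^i(\sigma_i(m))$ for $m\ne n$, hence $x_n \in Y_n - \bigcup_{m\ne n} Y_m$ directly. So the single idea you are missing --- fix witnesses at the top of the chain --- simultaneously handles the coherent-labelling issue and the non-redundancy of the limit decomposition.
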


\begin{proof}
The proof uses Zorn's lemma.
Suppose $\{X^i(1)  \oplus \ldots \oplus X^i(N): i \in \cI\}$ is a chain of decompositions.
Without loss of generality $\cI$ has top element $1 \in \cI$ and no bottom element.
Since $X^1(1)  \oplus \ldots \oplus X^1(N)$ is a decomposition there are points $x_n \in X^1(n) - \bigcup \{X^1(m):m \ne n\}$.

Let $X^i(1)  \oplus \ldots \oplus X^i(N)$ be arbitrary.
Since $i \le 1$ some permutation $\si_i$ has each $X^i \big (\si_i(n) \big ) \subset X^1 (n)$.
Thus $X^i \big (\si_i(n) \big )$ includes at most the element $x_{n}$ of $ \{x_1,x_2,\ldots,x_N\}$.
But since $X^i(1)  \cup \ldots \cup X^i(N) = X $ each $x_n$ is an element of some $X^i(m)$.
We conclude each $x_{n} \in X^i \big (\si_i(n) \big ) - \bigcup \big \{  X^i \big (\si_i(m) \big ) : m \ne n\big \}$.

For $j \le i$ we know $X^j \big (\si_j(n) \big )$ is contained in some $X^i \big (\si_i(m) \big )$.
The point $x_n$ witnesses how $m=n$.
We conclude each $X^j \big (\si_j(m) \big ) \subset X^i \big (\si_i(m) \big )$.
Then \cite{nadlerbook} Proposition 1.7 says the intersection $X_n =$ $\bigcap \big \{ X^i\big (\si_i(n)\big ):i \in I \big \}$ is a subcontinuum.
We claim $X_1, \ldots ,X_N$ form a decomposition which is clearly a lower bound for the chain.
First observe each $x_n$ witnesses how $X_n$ is not contained in $\bigcup \{X(m):m \ne n\}$.

To show $X_1 \cup \ldots \cup X_N = X$ let $x \in X$ be arbitrary.
For each $i \in \cI$ there is $n_i \le N$ with $x \in X^i\big (\si_i(n_i)\big )$.
Write $A(n) = \big \{i \in \cI: x \in X^i\big (\si_i(n)\big )\big \}$ for $n=1,2,\ldots, N$.
Since $\cI$ has no bottom element one of $A(n)$ is cofinal.
Thus $x \in \bigcap \big \{X^i\big (\si_i(n)\big ):i \in A(n)\big \}$ which equals $ 
\bigcap \big \{X^i\big (\si_i(n_i)\big ):i \in \cI\big \} $ by cofinality and thus $x \in X_n$.
Since $x$ is arbitrary we see $X_n$ cover $X$.

We conclude the chain $\{X^i(1)  \oplus \ldots \oplus X^i(N): i \in \cI\}$ has a lower bound $X_1 \oplus\ldots \oplus X_N$.
Zorn's lemma then implies each decomposition is above a minimal decomposition.
\end{proof}

\begin{cnj3}
Suppose $X$ has no shore points and $X = X_1  \oplus \ldots \oplus X_N$ 
is a minimal decomposition with $p \in X_1 \cap \ldots \cap X_N$.
Then $p$ is non-coastal when treated as a point of each $X_n$.
\end{cnj3}

Thus far we have only the partial results Lemmas \ref{5L1}, \ref{5L2} and \ref{5L3}.
Henceforth assume $X$ has no shore points. Fix $p \in X$ and let $X=X_1 \oplus X_2$ be a decomposition with $p \in X_1 \cap X_2$.
The case for $n > 2$ is similar.

\begin{lemma}\label{5L1}
Each dense semicontinuum of $X_1$ (resp. $X_2$) at $p$ contains \mbox{$X_1-X_2$ (resp. $X_2 - X_1$)}.
\end{lemma}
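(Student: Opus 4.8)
We are given a decomposition $X = X_1 \oplus X_2$ with $p \in X_1 \cap X_2$, under the standing assumption that $X$ has no shore points. We want to show that if $S \subset X_1$ is a dense semicontinuum with $p \in S$, then $X_1 - X_2 \subset S$. The plan is a proof by contradiction: suppose some point $x \in X_1 - X_2$ is not in $S$. The idea is that $S$, being a proper dense semicontinuum of $X_1$ omitting $x$, makes $x$ a coastal point of $X_1$, hence $X_1$ has a coastal point and therefore a non-block point, hence a shore point $q \in X_1$. The delicate part is then to transfer that shore point of $X_1$ back up to a shore point of the whole space $X$, contradicting the hypothesis.

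**Key steps.** First I would observe that since $x \in X_1 - X_2$ and $X_2$ is a proper subcontinuum containing $p$, we have $x \notin X_2$, so in fact $x$ lies in the interior of $X_1$ relative to $X$ (because $X - X_2$ is open and contained in $X_1$). Next, since $x \in S \cup \{x\}$ fails — i.e. $x \notin S$ while $S$ is dense in $X_1$ — the point $x$ is a coastal point of the continuum $X_1$, so $X_1$ has a coastal point, hence a non-block point, hence (since non-block points are shore points) a shore point $q \in X_1$. I would want $q$ to witness shoreness via subcontinua that stay away from $X_2$ to the extent possible; more carefully, I would use that a shore point of $X_1$ gives, for each finite family of open sets of $X_1$, a subcontinuum $K \subset X_1 - q$ meeting them all. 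The crucial move: choose the open sets so as to force these witnessing subcontinua to lie inside $X_1 - X_2$ (possible because $X_1 - X_2$ is nonempty open in $X_1$, assuming $q \neq$ that region), so that each such $K$ is automatically a subcontinuum of $X - q$. Letting the open family range over a basis of $X$ restricted suitably — here one must be careful that open sets of $X$ meeting $X_2 - X_1$ are handled, which is where a point of $X_2 - X_1$ enters — one assembles a net of subcontinua of $X - q$ tending to $X$, showing $q$ is a shore point of $X$, the desired contradiction.

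**Main obstacle.** The hard part will be the transfer step: a shore point of $X_1$ only lets us approximate $X_1$ by subcontinua avoiding $q$, but to get a shore point of $X$ we must also reach into $X_2 - X_1$. The resolution should exploit that $X = X_1 \oplus X_2$ with $p \in X_1 \cap X_2$ is a genuine decomposition, so $X_1 \cap X_2$ is nonempty; by boundary bumping, any subcontinuum $K \subset X_1$ with $p \in K$ can be enlarged by adjoining a continuum component of a point of $X_1 \cap X_2$ inside $X_2$ whose closure contains $p$, producing a subcontinuum of $X$ still avoiding $q$ (since $q \in X_1 - X_2$, it lies outside $X_2$ entirely, so nothing added from $X_2$ can hit $q$). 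Thus the witnessing subcontinua for $q$ as a shore point of $X_1$, after attaching this fixed "$X_2$-tail", meet any prescribed open subset of $X$ — including ones meeting $X_2 - X_1$ — while staying in $X - q$. I would also need to double-check the trivial cases: if $X_1 - X_2$ is empty the statement is vacuous, and if $q$ happens to coincide with the chosen point of $X_1 - X_2$ one re-chooses, using that $X_1 - X_2$, being nonempty open in the continuum $X_1$, is infinite. Once these bookkeeping points are settled the contradiction with "$X$ has no shore points" closes the argument.
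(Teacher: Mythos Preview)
Your route is far more complicated than needed, and the ``transfer'' step has a real gap. First a terminology slip: from $p\in S$ and $x\notin S$ you do not get that $x$ is \emph{coastal} in $X_1$; you get that $x$ is a \emph{non-block} point of $X_1$ (the dense semicontinuum $S$ sits in $X_1-x$). More seriously, once you pass from this concrete situation to ``$X_1$ has some shore point $q$'' you discard the only piece of data that makes the transfer work, namely that the witness $S$ contains $p\in X_1\cap X_2$. For an arbitrary shore point $q\in X_1$, the approximating subcontinua $K\subset X_1-q$ need not contain $p$ nor meet $X_1\cap X_2$ at all (that intersection can be a single point with empty interior in $X_1$), so there is no way to glue on an ``$X_2$-tail'' and reach open sets inside $X_2-X_1$. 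You also tacitly assume $q\in X_1-X_2$, which nothing guarantees.

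The paper's argument is one line and avoids all of this: if $p\in S\subset X_1$ is a dense semicontinuum of $X_1$ and $q\in (X_1-X_2)\setminus S$, then $S\cup X_2$ is a semicontinuum (it is the union of two semicontinua sharing the point $p$), it is dense in $X_1\cup X_2=X$, and it misses $q$ since $q\notin S$ and $q\notin X_2$. Hence $q$ is a non-block point, hence a shore point, of $X$ --- contradicting the standing hypothesis. The point is that you never need to manufacture an abstract shore point of $X_1$ and then lift it; the very point $q$ you assumed is missing from $S$ already serves, and the attachment is simply $X_2$ itself via the shared point $p$.
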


\begin{proof}
Suppose for example $p \in S \subset X_1 - q$ for some dense semicontinuum $S \subset X_1$ and $q \in X_1 - X_2$.
Then $X_2 \cup S \subset X-q$ is a dense semicontinuum of $X$.
This implies $q \in X$ is a non-block point hence a shore point contrary to assumption.
\end{proof}

\begin{corollary}\label{singleton}
Suppose $X_1 \cap X_2 = \{p\}$. Then $p$ is non-coastal as an element of $X_1$ and $X_2$.
\end{corollary}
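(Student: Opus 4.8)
The plan is to deduce Corollary~\ref{singleton} directly from Lemma~\ref{5L1} by exploiting the extra hypothesis $X_1 \cap X_2 = \{p\}$. The point of Lemma~\ref{5L1} is that any dense semicontinuum $S$ of $X_1$ passing through $p$ must contain $X_1 - X_2$; but under the present hypothesis $X_1 - X_2 = X_1 - p$, so $S \supset X_1 - p$, and combined with $p \in S$ we get $S = X_1$. In other words, the only dense semicontinuum of $X_1$ through $p$ is $X_1$ itself, which is not \emph{proper}, so $p$ cannot be coastal as a point of $X_1$. The argument for $X_2$ is identical by symmetry.

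Concretely, I would argue as follows. Suppose $p$ is coastal as an element of $X_1$. By definition there is a proper dense semicontinuum $S \subset X_1$ with $p \in S$; say $S \subset X_1 - q$ for some $q \in X_1$, and $q \neq p$ since $p \in S$. Then $q \in X_1 - p = X_1 - X_2$, so $q$ is exactly the kind of point excluded by Lemma~\ref{5L1}: that lemma (applied to $X_1$) says every dense semicontinuum of $X_1$ containing $p$ must contain $X_1 - X_2 \ni q$, contradicting $q \notin S$. Hence $p$ is non-coastal as an element of $X_1$, and the same reasoning with the roles of $X_1$ and $X_2$ interchanged gives non-coastality as an element of $X_2$.

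I do not anticipate a genuine obstacle here — this is essentially an immediate unwinding of Lemma~\ref{5L1} once one notices that the hypothesis $X_1 \cap X_2 = \{p\}$ forces $X_1 - X_2$ and $X_1 - p$ to coincide. The only point requiring a moment's care is the bookkeeping of which point plays the role of $q$ in the definition of coastal (one must check $q \neq p$, which is automatic since $p$ lies in the witnessing semicontinuum) and confirming that $q$ therefore lies in $X_1 - X_2$ rather than merely in $X_1$. Everything else is formal, and the symmetry between $X_1$ and $X_2$ means the second half needs no separate work. Accordingly the write-up will be just a few lines invoking Lemma~\ref{5L1} twice.

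\begin{proof}
Suppose toward a contradiction that $p$ is coastal as an element of $X_1$. Then there is a point $q \in X_1$ and a proper dense semicontinuum $S \subset X_1$ with $p \in S \subset X_1 - q$. Since $p \in S$ we have $q \neq p$, so $q \in X_1 - p = X_1 - X_2$, the last equality holding because $X_1 \cap X_2 = \{p\}$. But Lemma~\ref{5L1} applied to the dense semicontinuum $S$ of $X_1$ at $p$ gives $X_1 - X_2 \subset S$, hence $q \in S$, contradicting $S \subset X_1 - q$. Therefore $p$ is non-coastal as an element of $X_1$. Interchanging the roles of $X_1$ and $X_2$ shows $p$ is non-coastal as an element of $X_2$ as well.
\end{proof}
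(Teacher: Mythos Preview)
Your argument is correct and is exactly the intended deduction: the paper states the corollary without proof precisely because it follows immediately from Lemma~\ref{5L1} together with the observation $X_1 - X_2 = X_1 - p$ (and symmetrically for $X_2$). Your write-up makes this explicit and handles the minor bookkeeping (checking $q \ne p$) cleanly.
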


The following notation is part of Lemma \ref{5L2}

\begin{notation}
Suppose $X=X_1 \oplus X_2$. 
Write $\cS(X_1)$ (resp. $\cS(X_2)$) for the collections of proper dense semicontinua of $X_1$ (resp. $X_2$)
that meet both \mbox{$X_1 \cap X_2$} and $X_1 - X_2$ (resp. $X_2 - X_1$).
Define two subsets of $X_1 \cap X_2$.

\begin{center}
	$C_1 =  \{x \in X_1 \cap X_2: x \in S$ for each $S \in \cS(X_1)\}$.
	\vspace{2mm}
	
	$C_2 = \{x \in X_1 \cap X_2: x \in S$ for each $S \in \cS(X_2)\}$.
\end{center}\vspace{2mm}
\end{notation}
\vspace{-7mm}

\begin{lemma}\label{5L2}
Suppose $p$ is coastal as an element of both $X_1$ and $X_2$. 
Then one of $C_1 \cap C_2= \0$ or $C_1 \cup C_2 = X_1 \cap X_2 $ holds.
\end{lemma}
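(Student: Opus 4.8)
The plan is to argue by contradiction: suppose $p$ is coastal in both $X_1$ and $X_2$, yet there is a point $z \in C_1 \cap C_2$ and a point $w \in (X_1 \cap X_2) - (C_1 \cup C_2)$. The idea is that $z$ lies in every semicontinuum from $\cS(X_1) \cup \cS(X_2)$, whereas $w$ is missed by some $S_1 \in \cS(X_1)$ and by some $S_2 \in \cS(X_2)$. From $S_1 \in \cS(X_1)$ I get a point $q_1 \in X_1 - X_2$ that witnesses properness (using Lemma \ref{5L1} the semicontinuum must omit \emph{something}, and since it is dense and contains part of $X_1 - X_2$, the omitted point can be arranged to live where I need it); similarly $S_2 \in \cS(X_2)$ gives $q_2 \in X_2 - X_1$. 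Since $w \notin S_1$ and $S_1$ is proper dense in $X_1$, the witness that $S_1 \subset X_1 - q$ can be taken with $q = w$ if $w \in X_1 - X_2$ fails — but $w \in X_1 \cap X_2$, so I instead use $w$ itself as the omitted point only in the glued continuum. The first thing I would do is make these choices precise.

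Next I would glue the two semicontinua across $X_1 \cap X_2$. Because $z \in S_1 \cap S_2$ and $z \in X_1 \cap X_2$, the union $S_1 \cup S_2$ is again a semicontinuum (any two points are joined through $z$ by concatenating subcontinua inside $S_1$ and inside $S_2$). It is dense in $X = X_1 \cup X_2$ since $S_i$ is dense in $X_i$. The key point is to show $S_1 \cup S_2$ is \emph{proper} and omits a single point, so that it witnesses a non-block point of $X$ — contradicting the hypothesis that $X$ has no shore points (via Lemma \ref{5L1}'s proof mechanism: a dense semicontinuum of $X$ avoiding a point forces that point to be a non-block, hence shore, point). The natural candidate for the omitted point is $w$: I need $w \notin S_1 \cup S_2$, i.e. $w \notin S_1$ and $w \notin S_2$, which is exactly the assumption $w \notin C_1 \cup C_2$ — \emph{provided} I can realise the defining failures $w \notin C_i$ by the \emph{same} witnessing semicontinua $S_i$ simultaneously with $z \in S_i$. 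That simultaneity is the crux.

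So the main obstacle is arranging one $S_1 \in \cS(X_1)$ that both contains $z$ and misses $w$ (and likewise $S_2$). A priori the $S_1$ witnessing $w \notin C_1$ need not contain $z$. I would handle this by intersecting or enlarging: given any $S_1' \in \cS(X_1)$ with $w \notin S_1'$, the continuum component of $z$ in $S_1' \cup (\text{a subcontinuum of } \K_{X_1}(p) \text{ through } z)$ — more carefully, I would take $S_1'$ together with a proper subcontinuum of $X_1$ joining $p$ to $z$ (such exists because $z \in C_1 \subset X_1 \cap X_2$ and $X_1$ is a proper subcontinuum of $X$, so $z \in \K_{X}(p)$, and one checks a proper subcontinuum of $X_1$ itself works since $X_1 \ne X_1$ is false — instead use that $z$ lies in \emph{some} $S \in \cS(X_1)$ by $z \in C_1$, but that $S$ might contain $w$). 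The cleanest route: since $z \in C_1$, \emph{every} element of $\cS(X_1)$ contains $z$; in particular $S_1'$ (the one missing $w$) automatically contains $z$. That dissolves the obstacle entirely — $z \in C_1$ is doing real work. Symmetrically $z \in C_2$ gives $z \in S_2'$. Hence $S_1' \cup S_2'$ is a proper dense semicontinuum of $X$ missing $w$, producing the forbidden shore point and completing the contradiction. I would close by remarking that the $n>2$ case is identical, gluing $n$ semicontinua across the common point $z$.
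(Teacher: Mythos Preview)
Your proposal is correct and, once you reach the final paragraph, is exactly the paper's proof: pick $z \in C_1 \cap C_2$ and $w \notin C_1 \cup C_2$, choose $S_i \in \cS(X_i)$ missing $w$, note $z \in S_i$ by definition of $C_i$, and conclude $S_1 \cup S_2$ is a dense proper semicontinuum of $X$ omitting $w$, contradicting the standing hypothesis that $X$ has no shore points. The detour through Lemma \ref{5L1} and the supposed \emph{main obstacle} is unnecessary --- as you yourself observe at the end, $z \in C_i$ by definition forces $z$ into every member of $\cS(X_i)$, so the witnesses for $w \notin C_i$ automatically contain $z$.
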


\begin{proof}
Since $p \in X_1$ is coastal $\cS(X_1)$ is nonempty and likewise for $\cS(X_2)$.
Suppose $C_1 \cap C_2 \ne \0$ and $C_1 \cup C_2 \ne X_1 \cap X_2 $.
That means there are $x \in C_1 \cap C_2$ and $y \in X_1 \cap X_2 -C_1 \cup C_2$.
Select $S_1 \in \cS(X_1)$ and $S_2 \in \cS(X_2)$ with $y \notin S_1$ and $y \notin S_2$.

By definition we have $x \in S_1$ and $x \in S_2$.
Thus $S_1 \cup S_2 \subset X$ is a dense semicontinuum that excludes the point $y \in X$.
This contradicts how $X$ has no shore points.
We conclude $C_1 \cap C_2 \ne \0$ and so $C_1 \cup C_2 = X_1 \cap X_2$.
\end{proof}

In the first case of Lemma \ref{5L2} we can say more.

\begin{lemma}\label{5L3}
Suppose $p$ is coastal as an element of both $X_1$ and $X_2$ and \mbox{$C_1 \cap C_2= \0$}.
Then each subcontinuum of $X$ that meets $C_1$ and $C_2$ also contains $ X_1 \cap X_2 - C_1 \cup C_2 $.
\end{lemma}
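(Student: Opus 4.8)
The plan is to argue by contradiction, using the subcontinuum $K$ as a ``bridge''. Let $K\subset X$ be a subcontinuum meeting $C_1$ and $C_2$, fix points $x_1\in K\cap C_1$ and $x_2\in K\cap C_2$, and suppose some $y\in (X_1\cap X_2)-(C_1\cup C_2)$ fails to lie in $K$. Since $y\notin C_1$, the definition of $C_1$ produces a proper dense semicontinuum $S_1\in\cS(X_1)$ with $y\notin S_1$; symmetrically $y\notin C_2$ produces $S_2\in\cS(X_2)$ with $y\notin S_2$. The point is that $x_1\in C_1\subset S_1$ and $x_2\in C_2\subset S_2$, so $K$ overlaps $S_1$ at $x_1$ and overlaps $S_2$ at $x_2$; I would then set $S:=S_1\cup K\cup S_2$.

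First I would check that $S$ is a semicontinuum of $X$. The case needing a word is a pair $a\in S_1$, $c\in S_2$: choose subcontinua $L_1\subset S_1$ with $\{a,x_1\}\subset L_1$ and $L_2\subset S_2$ with $\{x_2,c\}\subset L_2$; then $L_1\cup K\cup L_2\subset S$ is a subcontinuum containing $a$ and $c$, since consecutive pieces share $x_1$ and $x_2$. The remaining pairs of points of $S$ either lie in one of $S_1$, $K$, $S_2$ or are handled the same way with a shorter chain.

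Next I would note $S$ is dense in $X$: since $X_1$ is closed, $S_1$ dense in $X_1$ gives $\overline{S_1}\supseteq X_1$, likewise $\overline{S_2}\supseteq X_2$, and $X=X_1\cup X_2$, so $\overline{S}\supseteq X_1\cup X_2=X$. Also $y\notin S_1$, $y\notin S_2$, and $y\notin K$ by assumption, so $S\subset X-y$. Hence $S$ lies in a single continuum component of $X-y$, which is therefore dense, making $y$ a non-block point and so a shore point — contradicting the standing assumption that $X$ has no shore points. Thus no such $y$ exists and $K\supseteq (X_1\cap X_2)-(C_1\cup C_2)$.

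I do not expect a genuine obstacle here; this is essentially the same ``splice two dense semicontinua along a bridge'' device already used in Lemmas \ref{5L1} and \ref{5L2}, and the only mildly delicate step is confirming that the splice $S$ is a semicontinuum while remaining dense and still avoiding $y$. The standing hypotheses that $p$ is coastal in each $X_i$ and that $C_1\cap C_2=\0$ only serve to place us in the regime of Lemma \ref{5L2}; they are not invoked directly in the argument, since as soon as $y\notin C_1\cup C_2$ the families $\cS(X_1)$ and $\cS(X_2)$ automatically contain members missing $y$ (and if $(X_1\cap X_2)-(C_1\cup C_2)=\0$ the statement is vacuous).
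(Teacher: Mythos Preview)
Your argument is correct and is essentially the paper's own proof: pick $S_1\in\cS(X_1)$ and $S_2\in\cS(X_2)$ missing the point in question, note $K$ meets each via $C_1\subset S_1$ and $C_2\subset S_2$, and observe $S_1\cup K\cup S_2$ is a dense semicontinuum, contradicting that $X$ has no shore points. The paper phrases the last step directly (``since $X$ has no shore points $S_1\cup K\cup S_2=X$, so $x\in K$'') rather than by contradiction, and your remark that neither $C_1\cap C_2=\0$ nor the coastal hypotheses are used in the body of the argument is accurate.
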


\begin{proof}
Suppose the subcontinuum $K \subset X$ meets $C_1$ and $C_2$.
Let $x \in X_1 \cap X_2 - C_1 \cup C_2 $ be arbitrary.
Select $S_1 \in \cS(X_1)$ and $S_2 \in \cS(X_2)$ with $x \notin S_1$ and $x \notin S_2$.
It follows $S_1 \cup K \cup S_2 \subset X$ is a dense semicontinuum.
Since $X$ has no shore points $S_1 \cup K \cup S_2 = X$ and so $x \in K$.
Since $x \in  X_1 \cap X_2 - C_1 \cup C_2  $ is arbitrary we conclude $ X_1 \cap X_2 - C_1 \cup C_2  \subset K$.
\end{proof}

\section*{Acknowledgements}
This research was supported by the Irish Research Council Postgraduate Scholarship Scheme grant number GOIPG/2015/2744. 
The author would like to thank Professor Paul Bankston and Doctor Aisling McCluskey for their help in preparing the manuscript, and the anonymous referee for their attention and suggestions.

\end{document}